\newcommand{\R}{\mathbb R}
\newcommand{\s}{\mathbb S}
\newcommand{\eps}{\varepsilon}
\newtheorem{thm}{Theorem}[section]
\newtheorem{lem}{Lemma}[section]
\newtheorem{prop}{Proposition}[section]
\newtheorem{defn}{Definition}
\newtheorem{rem}{Remark}[section]
\begin{document}

\title{Weak geodesic flow on a semi-direct product and global solutions to the periodic Hunter-Saxton system}

\author{{\sc Marcus Wunsch}
\footnote{Research Institute for Mathematical Sciences,
 Kyoto University, Kyoto 606-8502, Japan, Email: \emph{mwunsch@kurims.kyoto-u.ac.jp}, \emph{marcus.wunsch@gmx.net}}}

\maketitle 

\begin{abstract}
\noindent
We give explicit solutions to the two-component Hunter-Saxton system on the unit circle. Moreover, we show how global weak solutions can be naturally constructed using the geometric interpretation of this system as a re-expression of the geodesic flow on the semi-direct product of a suitable subgroup of the diffeomorphism group of the circle with the space of smooth functions on the circle. These spatially and temporally periodic solutions turn out to be conservative. 
\end{abstract}

{\it Keywords:} The Hunter-Saxton system; semi-direct product; weak geodesic flow; global conservative solutions

{\it 2010 Mathematics Subject Classification}
Primary 
35B44, 
35D30, 
Secondary 
58B20, 
53C22 

\noindent

\section{Introduction}
\setcounter{equation}{0}
In this paper, we are concerned with the two-component Hunter-Saxton system with periodic boundary conditions: 


\begin{equation} \label{2hs}
\begin{cases}
u_{txx} + u u_{xxx} + 2 u_x u_{xx} = \rho \rho_x, \qquad t > 0,\; x \in \s \simeq \R/\mathbb Z, \\
\rho_t + (u \rho)_x = 0 \\
\quad u(0,x) = \tilde u(x), \; \rho(0,x) = \tilde \rho(x). 
\end{cases}
\end{equation}
The Hunter-Saxton system \cite{Wun09,Wun10,wuwu,liyin1,liyin2} is a two-component generalization of the well-known Hunter-Saxton equation $u_{txx} + u u_{xxx} + 2 u_x u_{xx} = 0$ modeling the propagation of nonlinear orientation waves in a massive nematic liquid crystal (cf. \cite{h-s,bc,bhr,lenells,len08,len:weak,km,tiglay}), to which it reduces if $\tilde \rho$ is chosen to vanish identically. 

\par

In mathematical physics, the Hunter-Saxton system \eqref{2hs} arises as a model for the nonlinear dynamics of one-dimensional non-dissipative dark matter (the so-called Gurevich-Zybin system, see \cite{pavlov} and the references therein). 
Additionally, it is the short wave limit (using the scaling $(t,x) \mapsto (\eps t,\eps x)$, and letting $\eps \rightarrow 0$ in the resulting equations) of the two-component Camassa-Holm system originating in the Green-Naghdi equations which approximate the governing equations for water waves  \cite{ci,ELY07,guo,gz}. The Hunter-Saxton system is embedded in a more general family of coupled third-order systems \cite{Wun10} encompassing the axisymmetric Euler flow with swirl \cite{hl} and a vorticity model equation \cite{clm,osw}, among others (cf. \cite{pj,okamoto,w}). 

\par

Geometric aspects of \eqref{2hs} have recently been described in \cite{EE}: The Hunter-Saxton system can be realized as a geodesic equation on the semi-direct product of a subgroup of the group of circle diffeomorphisms with the space of smooth functions on the circle. This geometrical interpretation is closely linked to the re-expression of the Camassa-Holm and Degasperis-Procesi systems as (non-metric) Euler-Arnold equation (see, e.g., \cite{ekl10,ll}). \footnote{For the geometry of the Camassa-Holm {\it equation}, see, e.g., \cite{kour,misiolek,ck:geod,ck}.} 

\par

In this paper, we adopt the geometric viewpoint of \cite{EE} and adapt techniques of \cite{len:weak} developed for the Hunter-Saxton {\it equation} to construct global weak solutions to the two-component Hunter-Saxton system \eqref{2hs}. Weak solutions to the Hunter-Saxton system can, in analogy to the Hunter-Saxton equation \cite{hz,bc,len:weak}, be classified as either {\it dissipative} or {\it conservative}: While the energy $E(t) := \|u_x(t,.)\|_{L_2(\s)}^2 + \| \rho(t,.) \|_{L_2(\s)}^2$ is a decreasing functional for dissipative solutions, one has 
$$
E(t) = E(0) \qquad \mbox{ for almost every }t > 0
$$
for conservative solutions \cite{Wun10}. Here, we give a construction of conservative weak solutions with periodic boundary conditions.

\begin{rem}

Recently, the authors of \cite{liyin2} have proved that there are dissipative solutions to the $\mu$-Hunter-Saxton system which exist globally. The main differences to our results are that 
\begin{itemize}
\item we construct {\it conservative} weak solutions; 
\item we do not exclude zero values of the initial datum $\tilde \rho(x)$;  
\item our solutions are not only periodic in space, but also in time; 
\item our strategy involves the geometric interpretation of \eqref{2hs} (see \cite{EE}). 
\end{itemize}
\end{rem}

\noindent
The rest of this article is organized as follows: Section \ref{explicit} presents novel solution formulae of \eqref{2hs} and an explicit expression for the maximal time of existence. In Section \ref{semi}, we outline the analytic framework in which system \eqref{2hs} can be recast as a geodesic equation, following the approach of \cite{EE}. Finally, in Section \ref{main}, we show that the geodesic flow can be extended indefinitely in a weak sense, and we utilize this result in order to construct global conservative solutions to the Hunter-Saxton system. 

\begin{rem}[Notation]
The Hilbert spaces of functions (function equivalence classes) $f : \s \rightarrow \R$ which, together with their derivatives of order $k \ge 0$, are square-integrable, will be denoted by $H^k(\s)$, and the corresponding norm by $\| f \|_k$. If $k = 0$, we suppress the subscript and just write $\| f \|$. For the subspace of functions in $H^k(\s)$ with vanishing mean, we will use the symbol $H^k(\s) / \R$.
Lastly, the abbreviation $\{ f > 0\}$ for the set $\{ x \in \s:\; f(x) > 0 \}$ (and other analogous short forms) will be used throughout the text. 

\end{rem}

\section{Explicit solution formulae} \label{explicit}
\noindent
In this section, we provide new solution formulae for the Hunter-Saxton system. 
\\
Recall that the first component equation can be rewritten in terms of the gradient $u_x(t,x)$, 
\begin{equation} 
u_{tx} + u u_{xx} + \frac{1}{2} u_x^2 = \frac{1}{2} \rho^2 + a(t),  
\end{equation} 
\noindent
where the nonlocal term $a(t) = -\frac{1}{2} \int_{\s} \rho^2 + u_x^2 \;dx$ is determined by periodicity. 
It turns out (see \cite{Wun09,wuwu}) that $a'(t) = 0$, so that we may, without loss of generality\footnote{Observe that the Hunter-Saxton system is invariant under the scalings $u(t,x) \mapsto \alpha u(\alpha t,x)$, $\rho(t,x) = \alpha \rho(\alpha t,x)$, $\alpha \in \R$.}, set $a(t) = a(0) = -2$. \\
Let us introduce the Lagrangian flow map $\varphi$ solving 
\begin{equation} \label{flow}
\varphi_t(t,x) = u(t,\varphi(t,x)),\quad \varphi(0,x) = x \in \s.
\end{equation}
\noindent
In terms of $U(t,x) := u_x(t,\varphi)$, $\varrho(t,x) = \rho(t,\varphi)$, we rephrase  \eqref{2hs} as
\begin{equation} \label{lagran}
\begin{cases}
U_t + \frac{1}{2} U^2 = \frac{1}{2} \varrho^2 - 2\\
\varrho_t + U \varrho = 0 \\
\qquad U(0,x) =\tilde u_x(x),\; \varrho(0,x) = \tilde \rho(x). 
\end{cases}
\end{equation}
\noindent
Introducing the complex function $z(t,x) = U(t,x) + \sqrt{-1} \; \varrho(t,x)$, one sees after a short calculation that $z$ solves the Riccati-type differential equation
\begin{equation} \label{ric}
z_t(t,x) = -\frac{1}{2} \left( z(t,x)^2 + 4 \right), \quad z(0,x) = \tilde z(x) = \tilde u_x(x) + \sqrt{-1} \; \tilde \rho(x).
\end{equation}
\noindent 
Equation \eqref{ric} is explicitly solvable: 
$$
z(t,x) = \frac{2 \tilde z(x) - 4\tan t}{\tilde z(x) \tan t + 2}. 
$$
Separating real and imaginary parts, we get the following formulae for system \eqref{lagran}: 
\begin{eqnarray}
u_x(t,\varphi(t,x)) &=& \frac{4\cos (2t) \; \tilde u_x(x) + \sin (2t) \left[\tilde u_x(x)^2 + \tilde \rho(x)^2 - 4\right]}{\left[2 \cos t + \tilde u_x(x) \sin t \right]^2 + \tilde \rho(x)^2 \sin^2 t}, \label{ux} 
\\[0.2cm]
\rho(t,\varphi(t,x)) &=& \frac{4 \tilde \rho(x)}{\left[2 \cos t + \tilde u_x(x) \sin t \right]^2 + \tilde \rho(x)^2\sin^2 t}  \label{rho}. 
\end{eqnarray}
Interesting information can be drawn from the denominator in \eqref{ux}, \eqref{rho}: both solution components become unbounded at points $x^\ast \in \s$ which are both zeroes of $\tilde \rho$ and for which $\tilde u_x (x^\ast)= -2 \cot t$. 
The time-dependent and periodically recurring break-down set thus is the intersection
$$
B(t) = \{\tilde \rho= 0\} \cap \{ \tilde u_x = -2 \cot t \}. 
$$
Summing up, we have proven the subsequent theorem. 

\begin{thm}
Suppose $(\tilde u, \tilde \rho)^\dagger \in H^k(\s) \times H^{k - 1}(\s)$, $k \ge 3$ satisfies $\| \tilde u_x \|^2 + \| \tilde \rho \|^2 = 4$. 
Let 
$$\binom{u(t,.)}{\rho(t,.)} \in C([0,T^*); (H^k(\s)/\R) \times H^{k - 1}(\s)) \cap C^1([0,T^*); (H^{k - 1}(\s)/\R) \times H^{k - 2}(\s))$$ 
be the solution to the Hunter-Saxton system with initial data $(\tilde u,\tilde \rho)^\dagger$, and let $\varphi(t,x)$ with $\varphi(t,0) = 0$, $t \in [0,T^*)$, solve the Lagrangian flow map equation \eqref{flow}.
\par
Then 
\begin{equation} \label{phix}
\varphi (t,x) = \int_0^x \left\{ \left( \cos t + \frac{\tilde u_x(y)}{2} \; \sin t\right)^2 + \frac{\tilde \rho(y)^2}{4}\; \sin^2 t \right\}\;dy, 
\end{equation} 
and the first time when the diffeomorphism $\varphi(t,.): \; \s \rightarrow \s$ flattens out is given by  \hfill \\$T^\ast = \frac{\pi}{2} + \arctan \left\{ \frac{1}{2} \min_{\{\tilde \rho = 0\}} \tilde u_x(x) \right\}$. 
Lastly, the solution in terms of $\varphi$ reads 
\begin{equation*}
(u(t,.), \rho(t,.))^\dagger = \left(\varphi(t) \circ \varphi(t)^{-1}, \frac{\tilde \rho}{\varphi_x(t) \circ \varphi(t)^{-1}} \right)^\dagger. 
\end{equation*}
\end{thm}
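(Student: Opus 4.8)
The plan is to read off all three assertions from the already-established Lagrangian formulae \eqref{ux}--\eqref{rho}. Write $D(t,x) := [2\cos t + \tilde u_x(x)\sin t]^2 + \tilde \rho(x)^2\sin^2 t$ for the common denominator, so that the claimed integrand in \eqref{phix} is precisely $\tfrac14 D(t,x)$. To obtain \eqref{phix}, I would differentiate the flow equation \eqref{flow} in $x$, giving $\partial_t \varphi_x = u_x(t,\varphi)\,\varphi_x = U\varphi_x$, a linear ODE in $t$ for each fixed $x$ with $\varphi_x(0,x)=1$. Rather than integrate $U$ directly, I would verify that the candidate $\varphi_x = \tfrac14 D$ solves this ODE: since \eqref{ux} reads $U = N/D$ with $N$ its numerator, the identity $\partial_t(\tfrac14 D) = U\cdot \tfrac14 D$ reduces to $D_t = N$, a short trigonometric computation using $2A A_t$ with $A := 2\cos t + \tilde u_x \sin t$. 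Because $D(0,x)=4$, we have $\tfrac14 D(0,x)=1$, and uniqueness for the linear ODE forces $\varphi_x = \tfrac14 D$; integrating in $x$ with the normalization $\varphi(t,0)=0$ yields \eqref{phix}.

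Next I would locate the first time $\varphi_x$ vanishes. Since $\tfrac14 D$ is a sum of two squares, $\varphi_x(t,x)=0$ forces \emph{both} $\tilde\rho(x)=0$ and $2\cos t + \tilde u_x(x)\sin t = 0$, that is, $x\in\{\tilde\rho=0\}$ and $\tilde u_x(x) = -2\cot t$ (for $t\in(0,\pi)$). As $\tfrac{d}{dt}(-2\cot t)=2\csc^2 t>0$, the map $t\mapsto -2\cot t$ is strictly increasing on $(0,\pi)$, running from $-\infty$ to $+\infty$, so the earliest admissible $t$ occurs when $-2\cot t$ first meets the range of $\tilde u_x$ on the compact set $\{\tilde\rho=0\}$, namely at its minimum $m := \min_{\{\tilde\rho=0\}}\tilde u_x$. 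Solving $-2\cot t = m$ via $\cot(\tfrac\pi2+\theta)=-\tan\theta$ gives $T^\ast = \tfrac\pi2 + \arctan(\tfrac m2)\in(0,\pi)$; for $t<T^\ast$ one has $\varphi_x>0$, so $\varphi(t,\cdot)$ remains a diffeomorphism and degenerates first at $T^\ast$.

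For the reconstruction formula, the $u$-component is immediate from \eqref{flow}: the relation $\varphi_t(t,x)=u(t,\varphi(t,x))$ says exactly $u(t,\cdot)=\varphi_t(t)\circ\varphi(t)^{-1}$ on $\s$. For $\rho$, I would use the conservation law obtained from the second equation of \eqref{lagran} together with the ODE for $\varphi_x$: $\partial_t(\varrho\,\varphi_x)=\varrho_t\,\varphi_x+\varrho\,\partial_t\varphi_x = (-U\varrho)\varphi_x + \varrho(U\varphi_x)=0$, whence $\varrho\,\varphi_x\equiv\tilde\rho$. Thus $\rho(t,\varphi)=\tilde\rho/\varphi_x$, and composing with $\varphi(t)^{-1}$ gives the stated expression (consistency with \eqref{rho} is the identity $4\tilde\rho/D = \tilde\rho/\varphi_x$).

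I expect the only genuine obstacle to be the breakdown-time analysis: one must argue carefully that $D$ is a sum of squares so that both summands must vanish simultaneously, track the monotonicity and range of $-2\cot t$ on $(0,\pi)$ to justify that the \emph{minimum} of $\tilde u_x$ produces the \emph{first} degeneracy, and note the tacit hypothesis that $\{\tilde\rho=0\}$ is nonempty (otherwise $D>0$ throughout, the minimum is over an empty set, and the solution is global). The verifications of \eqref{phix} and of the reconstruction formula are routine once the earlier formulae are in hand.
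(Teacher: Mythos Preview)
Your proposal is correct and follows essentially the same route as the paper: the paper's ``proof'' is the discussion preceding the theorem, which derives the Lagrangian formulae \eqref{ux}--\eqref{rho} via the complex Riccati substitution $z=U+\sqrt{-1}\,\varrho$, identifies the break-down set from the common denominator, and then declares the theorem a summary of those computations. You take \eqref{ux}--\eqref{rho} as input and make explicit the steps the paper leaves to the reader: verifying that $\varphi_x=\tfrac14 D$ solves $\partial_t\varphi_x=U\varphi_x$ with the correct initial value (via the identity $D_t=N$), extracting $T^\ast$ from the monotonicity of $-2\cot t$, and deriving the reconstruction formula from the conservation law $\partial_t(\varrho\,\varphi_x)=0$. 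This is the intended argument, just spelled out in more detail; your caveat about the tacit nonemptiness of $\{\tilde\rho=0\}$ is also exactly the point the paper flags in the remark following the theorem.
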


\begin{rem}
From the expression for the maximal time of existence of solutions, we see that $T^*< \pi/2$ if there exist points $x$ where $\tilde \rho$ vanishes; else, one has $T^* = +\infty$, i.e., global existence in time (see \cite{wuwu}).   
The importance of the set $\{ \tilde \rho = 0 \}$ for the formation of singularities was described in \cite{Wun10}, Remark 5.2, for the case of (dissipative) solutions on the real line.
\end{rem}

\section{Semidirect products over the circle} \label{semi}
\noindent
In this section, we briefly outline the geometric approach of \cite{EE} to the two-component Hunter-Saxton system \eqref{2hs}. 
\par
Let {\sc Diff}$(\s)$ be the Lie group of the group of orientation-preserving diffeomorphisms of the circle, and denote by $C^\infty(\s)$ its Lie algebra. 
Also, define the following Fr\'echet Lie subgroup of {\sc Diff}$(\s)$ fixing one point: $$
\mbox{{\sc Diff}}_1(\s) := \{ \varphi \in \mbox{{\sc Diff}}(\s) :\; \varphi(1) = 1 \}. 
$$
Observe that the Lie algebra of {\sc Diff}$_1(\s)$ is the closed subspace 
$$
C_0^\infty(\s) := \{ f \in C^\infty(\s) : \; f(0) = 0 \}. 
$$
Now, we introduce the semidirect-product Lie group (cf. \cite{hmr,ht}) 
\begin{equation} \label{sdp}
C^\infty G = \mbox{{\sc Diff}}_1(\s) \; \textcircled{s} \; C^\infty(\s), 
\end{equation}
with the group multiplication $(\varphi,f) \; (\psi,g) := (\varphi \circ \psi, g + f \circ \psi)$, where $(\varphi,f)$, $(\psi,g)$ are two given elements of $G$, and $\circ$ stands for composition. The neutral element of $G$ is $(id,0)$, and the inverse of $(\varphi,f)$ is $(\varphi^{-1}, - f \circ \varphi^{-1})$. On the Lie algebra $T_{id} C^\infty G \simeq C_0^\infty(\s) \times C^\infty(\s)$, the Lie bracket is given by 
$$
[(u,\rho),(v,\sigma)] = (u_x v - u v_x, u\sigma_x - v \rho_x), \qquad (u,\rho),\;(v,\sigma) \in C^\infty \frak{g} := C_0^\infty(\s) \times C^\infty(\s). 
$$
The semi-direct product $C^\infty G$ can be equipped with a Riemannian metric via the inertia operator $\mathbb A : C^\infty \frak{g} \rightarrow C^\infty \frak{g}, \; (u,\rho)^\dagger \mapsto (-u_{xx},\rho)^\dagger$ by setting 
\begin{equation}
\left \langle \binom{u}{\rho},\binom{v}{\sigma} \right \rangle_{(id,0)}^\mathbb A := \frac{1}{4} \int_\s \left( (u,\rho)^\dagger \big| \mathbb A(v,\sigma)^\dagger \right)_{\R^2} \;dx := \frac{1}{4} \int_\s u_x v_x + \rho \sigma \;dx
\end{equation}
for $(u,\rho)^\dagger$, $(v,\sigma)^\dagger \in C^\infty \frak{g}$, and by extending it to all of $C^\infty G$ by right translation \cite{ekl10}. 
The topology induced by this Hilbert structure is weaker than the original Fr\'echet topology, which is why such a structure on $C^\infty G$ is called a {\it weak} Riemannian metric. 
Observe also that $\mathbb A$ gives rise to the norm 
$$
\left \|\left| (u,\rho)^\dagger \right|\right \|_\mathbb A := \frac{1}{2} \sqrt{ \| u_x \|^2 + \| \rho \|^2}. 
$$
This analytic framework facilitates the following geometric interpretation of the Hunter-Saxton system \eqref{2hs}: 
\begin{prop}[\cite{EE}, Corollary 3.2] \label{escher}
The system \eqref{2hs} describes the geodesic flow of the right-invariant $\dot H^{1}(\s) \times H^0(\s)$ metric on the Fr\'echet Lie group {\sc Diff}$_1(\s) \; \textcircled{s} \; C^\infty(\s)$. 
\end{prop}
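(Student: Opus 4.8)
The plan is to read the statement off the Euler--Arnold formalism for right-invariant weak Riemannian metrics on Lie groups. For such a metric, determined by an inertia operator $\mathbb A$, a curve $t \mapsto \eta(t)$ in $C^\infty G$ is a geodesic if and only if its right-logarithmic derivative $w := \eta_t \circ \eta^{-1} \in C^\infty\mathfrak{g}$ satisfies the Euler--Arnold equation
\[
\partial_t m + \mathrm{ad}^*_w\, m = 0, \qquad m := \mathbb A\, w,
\]
where $\mathrm{ad}^*_w$ is the coadjoint operator, i.e. the formal $L^2(\s)$-adjoint of $\mathrm{ad}_w = [w,\,\cdot\,]$ on $C^\infty\mathfrak{g}$. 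Thus the whole proposition reduces to two computations: determining $\mathrm{ad}^*$ for the present semidirect-product algebra, and verifying that, for the given $\mathbb A$, the resulting system is exactly \eqref{2hs}.

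First I would compute $\mathrm{ad}^*_{(u,\rho)}$ explicitly. Pairing $m = (m_1,m_2)$ with the bracket recorded above against an arbitrary test element $(v,\sigma) \in C^\infty\mathfrak{g}$ and integrating by parts twice---no boundary terms arise because everything is periodic---gathers the coefficients of $v$ and of $\sigma$ separately and yields
\[
\mathrm{ad}^*_{(u,\rho)}(m_1,m_2) = \bigl(\, (m_1 u)_x + m_1 u_x - m_2 \rho_x,\ -(m_2 u)_x \,\bigr).
\]
This has the structure one expects of a semidirect product: the first slot is the Virasoro-type coadjoint action $um_{1,x}+2m_1u_x$ of the diffeomorphism factor together with a coupling term $-m_2\rho_x$ produced by the module action of $C_0^\infty(\s)$ on $C^\infty(\s)$, while the second slot is the dual Lie derivative transporting $m_2$ as a density.

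Next I would insert the inertia operator. Since the metric is the product $\dot H^1 \times H^0$ metric, one has $\mathbb A(u,\rho)^\dagger = (-u_{xx},\rho)^\dagger$, so that $m_1 = -u_{xx}$ and $m_2 = \rho$. Substituting these into the two components of $\partial_t m + \mathrm{ad}^*_w m = 0$ and simplifying returns, up to the sign reconciliation discussed below, the momentum equation $u_{txx} + u u_{xxx} + 2 u_x u_{xx} = \rho\rho_x$ in the first slot and the continuity equation $\rho_t + (u\rho)_x = 0$ in the second; together these are precisely \eqref{2hs}. A good consistency check is the reduction $\rho \equiv 0$, which collapses the computation to the classical fact that the scalar Hunter--Saxton equation is the geodesic flow of the right-invariant $\dot H^1$ metric on $\mbox{{\sc Diff}}_1(\s)$.

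I expect the genuine difficulty to lie entirely in the bookkeeping of signs and of the correct duality rather than in any hard analysis. Three conventions must be pinned down consistently: the sign of the Lie bracket (the right-invariant, "minus vector-field" convention is the one compatible with the bracket above and with the geodesic-equation sign), the sign distinguishing $\partial_t m + \mathrm{ad}^* m = 0$ from $\partial_t m = \mathrm{ad}^* m$ (right- versus left-invariance), and---most delicate for the $C^\infty(\s)$ factor---the fact that $\rho$ must be paired as a \emph{density} rather than as a function. It is exactly the dual-Lie-derivative character of the coadjoint action on that factor that forces the second equation into conservation form $(u\rho)_x$ and that fixes the sign of the coupling $\rho\rho_x$ in the first; since these two $\rho$-terms flip together with the module-action convention, one must choose that convention so that \eqref{2hs} results, and this is where I would be most careful. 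Finally, because the metric is only \emph{weak}---its topology is strictly coarser than the Fréchet topology of $C^\infty G$---one should not take for granted that the Euler--Arnold reduction is valid; I would invoke the smoothness of the geodesic spray and the reduction in this weak setting as established within the framework of \cite{EE} (and the Escher--Kolev theory cited there) rather than reprove it.
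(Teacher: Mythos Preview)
Your derivation via the Euler--Arnold formalism is the standard and correct route to this result, and the computation of $\mathrm{ad}^*$ and the subsequent identification with \eqref{2hs} are carried out properly; your caveats about sign conventions and the weak-metric setting are also apt. However, the paper itself does not prove this proposition at all: it is stated as a citation of Corollary~3.2 of \cite{EE} and is used as background input rather than as something established here. So there is nothing in the paper to compare your argument against---you have supplied a proof where the paper simply invokes an external reference, and what you wrote is presumably close in spirit to what \cite{EE} does.
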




\subsection*{The space $\mathcal M_{AC}^0$}
For the {\it weak formulation} of the geodesic flow of the right-invariant $\dot H^1(\s) \times H^0(\s)$ metric, we define the space $\mathcal M^0_{AC}$ as the semi-direct product $M^{AC} \textcircled{s} H^0(\s)$, where $M^{AC}$ is the set of nondecreasing absolutely continuous functions $\varphi: [0,1] \rightarrow [0,1]$ with $\varphi(0) = 0$ and $\varphi(1) = 1$ (see \cite{len:weak}). 
The tangent space at the identity is naturally defined as 
$$
T_{(id,0)} \; \mathcal M^0_{AC} := \{ u \in H^1(\s) : \; u(0) = 0\} \times H^0(\s). 
$$

\subsubsection*{Characterization of tangent spaces.}
In analogy with \cite{len:weak}, we have the subsequent lemma. 
\begin{lem} \label{character}
Let $\varphi \in AC(\s)$, the set of absolutely continuous functions $\s \rightarrow \R$, and write 
\begin{equation} \label{null}
N := \{ x \in \s:\; \varphi_x(x) \mbox{ exists and equals } 0 \}. 
\end{equation}
Also, let $\varrho \in H^0(\s)$. Then we have the characterization 
\begin{eqnarray} \label{tM}
&&T_{ ( \varphi, \varrho) } \; \mathcal M^0_{AC} = 
\left \{ (U,F)^\dagger \in AC(\s) \times H^0(\s) :  \; U(0) = 0, \right. \\
&&Ê\left.  \qquad \qquad \quad \quad \;
U_x = 0 \; \mbox{ a.e. on } N \mbox{ and } \int_{ \s \setminus N} \frac{U_x^2}{\varphi_x} + (F - \varrho)^2 \varphi_x\; dx < + \infty \nonumber
 \right \}. 
\end{eqnarray}
\noindent
Furthermore, for any elements $(U,F)$, $(V,G) \in T_{(\varphi,\varrho)} \; \mathcal M^0_{AC}$, the inner product reads 
\begin{equation} \label{inprod}
\left \langle (U,F), (V,G) \right \rangle _{(\varphi,\varrho)}^\mathbb A = \frac{1}{4} \int_{\s \setminus N} \frac{U_x V_x}{\varphi_x} + (F - \varrho)(G - \varrho)\; \varphi_x \;dx. 
\end{equation}
\end{lem}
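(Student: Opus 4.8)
The plan is to adapt the argument of Lenells \cite{len:weak} for the Hunter--Saxton equation, handling the diffeomorphism factor $M^{AC}$ and the $H^0(\s)$ factor simultaneously and reducing the computation of the metric to the identity by right-invariance.

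First I would dispose of the regular case in which $\varphi$ is a genuine diffeomorphism, so that $\varphi_x>0$ a.e. and $N=\emptyset$. Here both the tangent-space description and the inner product follow from right translation alone: reducing a tangent vector $(U,F)$ at $(\varphi,\varrho)$ to the Lie algebra via the semidirect-product structure produces the pair $(U\circ\varphi^{-1},(F-\varrho)\circ\varphi^{-1})$, where the base-point shift $F-\varrho$ in the second slot is exactly the reduction of the advected density in the sense of \cite{hmr,EE}. Inserting this into the identity metric $\frac14\int u_x v_x+\rho\sigma\,dx$ and performing the change of variables $x=\varphi(y)$ (whose Jacobian is $\varphi_x$) converts $\int u_x^2\,dx$ into $\int U_x^2/\varphi_x\,dy$ and $\int\rho^2\,dx$ into $\int(F-\varrho)^2\varphi_x\,dy$, which is precisely \eqref{inprod} with $N=\emptyset$.

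Next I would pass to a general $\varphi\in AC(\s)$, which may degenerate on the set $N$ of \eqref{null}, and prove the two inclusions in \eqref{tM}. For necessity, differentiate an admissible curve $t\mapsto(\varphi(t,\cdot),\varrho(t,\cdot))$ in $\mathcal M^0_{AC}$ through $(\varphi,\varrho)$: the constraint $\varphi(t,0)=0$ gives $U(0)=0$; the monotonicity $\varphi_x(t,\cdot)\ge 0$, together with $\varphi_x=0$ on $N$ at $t=0$, makes $t=0$ a minimum of $t\mapsto\varphi_x(t,x)$ for a.e. $x\in N$ and hence forces $\partial_t\varphi_x=U_x=0$ a.e. on $N$; and finiteness of the weak norm yields the integrability of $U_x^2/\varphi_x+(F-\varrho)^2\varphi_x$ over $\s\setminus N$. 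For sufficiency I would verify that, given $(U,F)$ satisfying these conditions, the curve $t\mapsto(\varphi+tU,\varrho+tF)$ remains in $\mathcal M^0_{AC}$ for small $|t|$---on $N$ the identity $U_x=0$ preserves $\varphi_x+tU_x\ge 0$, while on $\s\setminus N$ the pointwise a.e. finiteness of $U_x/\varphi_x$ (a consequence of the $L^2$ bound) does the same---so that $(U,F)$ is genuinely tangent. The inner product \eqref{inprod} is then read off pointwise a.e. on $\s\setminus N$, where $\varphi_x>0$ makes the regular-case computation valid verbatim.

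The hard part will be the analysis on the degeneracy set $N$: away from $N$ the change of variables $x=\varphi(y)$ is harmless, but where $\varphi_x$ vanishes the weak metric degenerates, so one must argue rigorously that admissible directions cannot deform $\varphi$ transversally on $N$ (the constraint $U_x=0$ a.e. there) and that the integrals over $\s\setminus N$ are the correct finite quantities. A secondary obstacle is justifying the reduction of the advected second component---i.e. the precise origin of the shift $F-\varrho$---and the approximation of merely absolutely continuous maps by smooth diffeomorphisms while keeping the $(F-\varrho)$-weighted integral under control; as in \cite{len:weak}, both points are handled by combining the monotonicity constraint with the Cauchy--Schwarz splitting $|U_x|=(U_x^2/\varphi_x)^{1/2}\varphi_x^{1/2}$ on $\s\setminus N$.
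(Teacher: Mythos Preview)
Your proposal takes a different route from the paper and contains a genuine gap in the sufficiency direction.

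The paper does not use a curve-based definition of tangent vectors. Instead, $T_{(\varphi,\varrho)}\mathcal M^0_{AC}$ is \emph{defined} via right translation from the identity: a pair $(U,F)$ is tangent at $(\varphi,\varrho)$ exactly when there exist $u\in H^1(\s)$ with $u(0)=0$ and $\rho\in H^0(\s)$ such that $U=u\circ\varphi$ and $F=\varrho+\rho\circ\varphi$. With this as the starting point, the lemma is an intrinsic characterization, and both inclusions are obtained by the change of variables $y=\varphi(x)$ together with the decomposition $\s=A\cup N\cup Z$ (where $A=\{\varphi_x>0\}$ and $Z$ is null), using that $\varphi(A)$ has full measure while $\varphi(N)$ and $\varphi(Z)$ are null. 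For the $\supset$ direction one simply sets $\rho(y):=(F-\varrho)(\varphi^{-1}(y))$ on $\varphi(A)$ and checks $\rho\in H^0(\s)$; the shift $F-\varrho$ then falls out of the semidirect-product multiplication without any separate justification or approximation argument.

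Your curve-based sufficiency argument breaks down. You assert that $t\mapsto\varphi+tU$ remains in $M^{AC}$ for small $|t|$ because $U_x/\varphi_x$ is a.e.\ finite on $\s\setminus N$. But a.e.\ finiteness yields no uniform bound: with $\varphi_x(x_n)=n^{-2}$ and $U_x(x_n)=-n^{-1}$ one has $U_x^2/\varphi_x=1$ at each $x_n$ (compatible with the integrability hypothesis), yet $\varphi_x+tU_x<0$ at $x_n$ whenever $t>1/n$, so no positive $t$ works. This is not a technicality: $M^{AC}$ is not a manifold in the usual sense---points where $\varphi_x$ vanishes on a set of positive measure lie on a ``boundary''---so curve-tangents and right-translates of the Lie algebra need not coincide, and the paper deliberately adopts the latter. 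Your necessity step also appeals to ``finiteness of the weak norm,'' which has no content until the tangent space has already been identified with right-translates; at that point the curve argument is superfluous and you are back to the paper's route.
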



\begin{proof}
Define the set $A := \{ x \in \s : \; \varphi_x(x) \mbox{ exists and } \varphi_x(x) > 0 \}$. The unit circle $\s$ is thus the union of the three sets $A$, $N$, and $Z$, the latter being of measure zero. Moreover, the sets $\varphi(A)$, $\varphi(N)$, and $\varphi(Z)$ are measurable, where $\varphi(A)$ has full Lebesgue measure $\lambda$, while for the other two one has $\lambda( \varphi(N)) = \lambda ( \varphi(Z) ) = 0$.    
\par 
Suppose that $(U,F) \in T_{(\varphi,\varrho)} \mathcal M^0_{AC}$ for some $(\varphi,\varrho) \in \mathcal M^0_{AC}$. By definition, this means that there exist $u \in H^1(\s)$ with $u(0) = 0$ such that $U = u \circ \varphi$, and that there is a $\rho \in H^0(\s)$ such that $F = \varrho + \rho \circ \varphi$. One can see that $U(0) = 0$ and that $U_x = 0$ a.e. on $\s$ (cf. \cite{len:weak}). 
Also, 
\begin{eqnarray} \label{norm}
\int_A \frac{U_x^2}{\varphi_x} + (F - \varrho)^2\; \varphi_x\;dx &=& 
\int_A \left[ (u_x \circ \varphi)^2 + (\rho \circ \varphi)^2 \right] \varphi_x \;dx \\ 
&=& \int_\s \left[ (u_x \circ \varphi)^2 + (\rho \circ \varphi)^2 \right] \varphi_x \;dx 
= \int_\s u_x^2 + \rho^2\;dy < +\infty. \nonumber
\end{eqnarray} 
This proves the inclusion $\subset$ in \eqref{tM}. 
\par 
Conversely, for $U \in AC(\s)$ satisfying $U(0) = 0$, $U_x = 0$ a.e. on $N$ and $\int_{\s \setminus N}\frac{U_x^2}{\varphi_x} \;dx < \infty$, one can show as in \cite{len:weak} that there exists a $u \in H^0(\s)$ with $u(0) = 0$. As for the second component, we define a function $\rho\;$ a.e. on $\s$ by 
$$
\rho(y) = (F - \varrho)(\varphi^{-1}(y)), \quad y \in \varphi(A). 
$$
Now we see that
\begin{eqnarray*}
\int_\s \rho^2\;dy &=& \int_\s \left[ (F - \varrho)\circ \varphi^{-1} \right]^2 \;dy = \int_\s \left[ (F - \varrho) \circ \varphi^{-1} \right]^2 \circ \varphi \;\varphi_x \;dy,
\end{eqnarray*}
which equals to $\int_A (F - \varrho)^2 \;dx$ and is finite by Young's inequality, since both $F$ and $\varrho$ are square-integrable. This shows the inclusion $\supset$ in \eqref{tM}. 
We finish the proof by noticing that \eqref{inprod} is a consequence of \eqref{norm}. 
\end{proof}

\section{Main results} \label{main}
\noindent
Before stating our main results, let us introduce the Christoffel operator $\Gamma_{(\varphi,\varrho)}(.,.)$ defined in \cite{ekl10} on the product $H^k(\s) \times H^{k - 1}(\s)$, $k > 5/2$, as 
\begin{equation*}
\Gamma_{(id,0)} \left( (u,\rho), (v,\sigma) \right) = \binom{\Gamma^0_{id} (u,v) - \frac{1}{2} A^{-1} \partial_x (\rho \sigma)}{-\frac{1}{2} (u_x \sigma + v_x \rho)}, 
\end{equation*}
where $A = -\partial_x^2$. This operator is then extended by right translation to the tangent spaces of the semi-direct product $C^\infty G$ (cf. \cite{EE}). Recall that the inverse of $A$ is given by  
$$
(A^{-1} f) \; (x) = - \int_0^x \int_0^y f(z) \;dz \;dy + x \int_0^1 \int_0^y f(z) \;dz \;dy
$$
for all $f \in H^k(\s)/\R$, $k \ge 1$ (see, e.g., \cite{len:weak}).

\noindent
The following statement asserts the global existence of a weak geodesic flow on $\mathcal M^0_{AC}$. 

\begin{thm} \label{wgf}
Let $(\tilde u,\tilde \rho) \in T_{(id,0)} \mathcal M^0_{AC} = \{ u \in H^1(\s) : \; u (0) = 0 \} \times H^0(\s)$ with $\| \tilde u_x \|^2 + \| \tilde \rho \|^2 = 4$. 
Define 
\begin{equation} \label{varphi}
\varphi(t,x) := \int_0^x \left( f^2(t,y) + g^2(t,y) \right) \;dy, \mbox{ where }
\begin{cases}
\; f(t,x) := \left( \cos t + \frac{\tilde u_x}{2} \sin t \right), \mbox{ and } \\[0.1cm]
\; g(t,x) := \frac{\tilde \rho}{2} \sin t, \quad (t,x) \in [0,\infty) \times \s. 
\end{cases}
\end{equation}
Moreover, let 
\begin{equation} \label{varrho}
\varrho(t,x) := \tilde \rho(x)  \int_0^t \frac{\chi_{ \{\varphi_x(s,.) > 0 \} }}{\varphi_x(s,x)} \;ds. 
\end{equation}
\\
Then the following statements are true. 
\begin{enumerate}[(i)]
\item For each time $t \ge 0$, $( \varphi(t,.), \varrho(t,.)) \in \mathcal M_{AC}^0$. 
\item For each time $t \ge 0$, $( \varphi_t(t,.), \varrho_t(t,.)) \in T_{(\varphi,0 )} \; \mathcal M^0_{AC}$. 
\item If $\lambda$ denotes Lebesgue measure on $\s$, then, for $t \ge 0$,  
\begin{equation} \label{lebesgue}
\left\langle \binom{\varphi_t}{\varrho_t}, \binom{\varphi_t}{\varrho_t} \right\rangle_{(id,0)}^\mathbb A = 
\begin{cases}
1,  \quad \qquad \qquad\; \quad \; \qquad \; \qquad \quad \; \quad \; \; \; \qquad\sin t = 0 \\
1 - \frac{1}{\sin^2 t} \lambda \left(\{ \tilde u_x = -2 \cot t \} \cap \{ \tilde \rho = 0\} \right),  \; \sin t \neq 0. 
\end{cases}
\end{equation}
Specifically, the geodesic has constant energy almost everywhere: 
\begin{equation} \label{const:ener}
\left \langle \binom{\varphi_t}{\varrho_t}, \binom{\varphi_t}{\varrho_t} \right \rangle_{(id,0)}^\mathbb A = \left \langle \binom{\varphi_t(0)}{\varrho_t(0)}, \binom{\varphi_t(0)}{\varrho_t(0)} \right \rangle_{(id,0)}^\mathbb A \quad \mbox{ for a.e. }t \in [0,\infty). 
\end{equation}
\item The geodesic equation holds in the weak form 
\begin{equation} \label{geodesic}
\binom{\varphi_{tt}}{\varrho_{tt}} = \Gamma_{(\varphi,\varrho)} \left( (\varphi_t,\varrho_t), (\varphi_t,\varrho_t) \right) \quad \mbox{ for a.e. } t \in [0,\infty).
\end{equation}
\end{enumerate}
\end{thm}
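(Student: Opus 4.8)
The plan is to prove the four assertions in turn, leaning throughout on three elementary observations about the building blocks $f,g$: each satisfies the harmonic equation in time, $f_{tt}=-f$ and $g_{tt}=-g$; their Wronskian is constant, $f_t g - f g_t = -\tfrac12\tilde\rho$; and, writing $\varphi_x = f^2+g^2$ and $\varphi_{tx}=2(ff_t+gg_t)$, the pointwise Pythagorean identity
\begin{equation*}
(ff_t+gg_t)^2 + (f_t g - f g_t)^2 = (f^2+g^2)(f_t^2+g_t^2)
\end{equation*}
holds. I will also use the identifications $U:=\varphi_{tx}/\varphi_x = u_x\circ\varphi$ and $\varrho_t = \tilde\rho\,\chi_{\{\varphi_x>0\}}/\varphi_x = \rho\circ\varphi$ valid on $\s\setminus N$, where $\rho$ is the pulled-back second solution component.

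First I would establish (i). The map $\varphi(t,\cdot)$ is nondecreasing and absolutely continuous as the indefinite integral of the nonnegative $L^1$-density $f^2+g^2$, and $\varphi(t,0)=0$ is immediate; expanding $\int_0^1(f^2+g^2)\,dy$ and using $\int_0^1\tilde u_x\,dy=0$ together with the normalization $\|\tilde u_x\|^2+\|\tilde\rho\|^2=4$ collapses the integral to $\cos^2 t+\sin^2 t=1$, so $\varphi(t,\cdot):\s\to\s$ and $\varphi(t,\cdot)\in M^{AC}$. The one genuinely nontrivial point is $\varrho(t,\cdot)\in H^0(\s)$: with $a=\tilde u_x/2$, $b=\tilde\rho/2$ one rewrites $\varphi_x(s,x)=C+R\cos(2s-\phi)$ and computes $C^2-R^2=b^2=\tilde\rho^2/4$, so the classical evaluation $\int_0^\pi \frac{ds}{C+R\cos(2s-\phi)}=\pi/\sqrt{C^2-R^2}=2\pi/|\tilde\rho(x)|$ gives, after multiplication by the prefactor $\tilde\rho(x)$, the bound $|\varrho(t,x)|\le C(t)$ \emph{uniform} in $x$; hence $\varrho(t,\cdot)\in L^\infty(\s)\subset H^0(\s)$.

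For (ii) I compute $\varphi_t=\int_0^x\varphi_{tx}\,dy$ and $\varrho_t$ and verify the conditions of Lemma \ref{character} at the base point $(\varphi,0)$: $\varphi_t(t,0)=0$ is clear, and on $N$ one has $f=g=0$, so $\varphi_{tx}=2(ff_t+gg_t)=0$ there, giving $U_x=0$ a.e.\ on $N$. Finiteness of $\int_{\s\setminus N}\varphi_{tx}^2/\varphi_x+\varrho_t^2\varphi_x\,dx$ follows because the Pythagorean identity yields $\varphi_{tx}^2/\varphi_x\le 4(f_t^2+g_t^2)\in L^1(\s)$ while $\varrho_t^2\varphi_x=\tilde\rho^2/\varphi_x\le 4/\sin^2 t$. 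The same identity drives (iii): it reduces the integrand $\tfrac14(\varphi_{tx}^2/\varphi_x+\varrho_t^2\varphi_x)$ \emph{exactly} to $f_t^2+g_t^2$ (the Wronskian term cancels the $\tilde\rho^2/\varphi_x$ contribution), so the energy equals $\int_{\s\setminus N}(f_t^2+g_t^2)\,dx$. Since $\int_\s(f_t^2+g_t^2)\,dx=1$ and $f_t^2+g_t^2=1/\sin^2 t$ on $N$ (where $\tilde\rho=0$ and $\tilde u_x=-2\cot t$), one obtains the stated value $1-\lambda(N)/\sin^2 t$. Almost-everywhere constancy is then immediate: the level sets $\{\tilde u_x=c\}$ have positive measure for at most countably many $c$, so $\lambda(N)=0$ for all but countably many $t$, and the energy equals its initial value $1$ for a.e.\ $t$.

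Finally, for (iv) I would compute both sides explicitly. From $f_{tt}=-f$, $g_{tt}=-g$ one gets $\varphi_{ttx}=2(f_t^2+g_t^2)-2\varphi_x$, hence $\varphi_{tt}=2\int_0^x(f_t^2+g_t^2)\,dy-2\varphi$, while differentiating $\varrho_t$ gives $\varrho_{tt}=-U\varrho_t$. On the other side, the right-translated Christoffel operator of \cite{ekl10,EE} has second component equal to the right translation of $-u_x\rho$, namely $-U\varrho_t$, matching $\varrho_{tt}$ at once; its first component is the right translation of $\Gamma^0_{id}(u,u)-\tfrac12 A^{-1}\partial_x(\rho^2)$, whose $x$-derivative should equal $\tfrac12\varphi_{tx}^2/\varphi_x+\tfrac12\varrho_t^2\varphi_x-2\varphi_x$ on $\s\setminus N$, the constant $-2$ being pinned down by the normalization through the conserved energy of (iii). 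Invoking the Pythagorean identity once more turns $\tfrac12\varphi_{tx}^2/\varphi_x+\tfrac12\varrho_t^2\varphi_x$ back into $2(f_t^2+g_t^2)$, which matches $\varphi_{ttx}$, and integration in $x$ (both sides vanishing at $x=0$) closes the first component. I expect the main obstacle to be exactly this first-component identification: one must make the right translation of the \emph{nonlocal} operator $A^{-1}\partial_x$ rigorous in the weak regime where $\varphi_x$ vanishes on $N$, and tame the degeneracy by restricting to $\s\setminus N$ and invoking $\lambda(N)=0$ for a.e.\ $t$ — which is precisely why \eqref{geodesic} is asserted only for almost every $t$.
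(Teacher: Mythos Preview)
Your proposal is correct and follows essentially the same route as the paper: the harmonic relations $f_{tt}=-f$, $g_{tt}=-g$, the key algebraic identity $\varphi_{tx}^2/\varphi_x+\tilde\rho^2/\varphi_x=4(f_t^2+g_t^2)$ on $\{\varphi_x>0\}$ (what you call the Pythagorean identity, what the paper writes as \eqref{reveal}), and the countable-level-sets argument for a.e.\ constancy of the energy (which the paper outsources to \cite{len:weak}) are all the same. Your explicit $L^\infty$ bound on $\varrho$ in part (i) via the evaluation $C^2-R^2=\tilde\rho^2/4$ is in fact more than the paper supplies there.
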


\noindent
The weak geodesic formulation \eqref{geodesic} allows us to study weak solutions to the Hunter-Saxton system \eqref{2hs}. 

\begin{defn}[Definition of a weak solution to \eqref{2hs}] \label{we:so}
The pair $(u,\rho)^\dagger: \left( [0,\infty) \times \s \right) ^2 \rightarrow \R$ is a global conservative solution of equation \eqref{2hs} with initial data $(\tilde u,\tilde \rho)^\dagger \in H^1 \times H^0(\s)$ if 
\begin{enumerate}[(a)]
\item for each $t \in [0,\infty)$, the map $x \mapsto u(t,x)$ is in $H^1(\s)$;
\item $u \in C([0,\infty) \times \s; \R)$ and $u(0,.) = \tilde u$ point-wise on $\s$; $\rho(0,.) = \tilde \rho\;$ a.e. on $\s$; 
\item the maps $t \mapsto u_x(t,.)$ and $t \mapsto \rho(t,.)$ belong to the space $L^\infty([0,\infty); H^0(\s))$; 
\item the map $t \mapsto u(t,.)$ is absolutely continuous from $[0,\infty)$ to $H^0(\s)$ and satisfies 
$$
u_t + u u_x = \frac{1}{2} \left\{ \int_0^x \left( u_x^2 + \rho^2 \right) \;dy +  x \int_\s \left( u_x^2 + \rho^2 \right) \;dy \right\} \; \mbox{ in } H^0(\s) \mbox{ for a.e. } t \in [0,\infty); $$
and the map $t \mapsto \rho(t,.)$ satisfies 
$$
\rho_t + (u\rho)_x = 0 \quad \mbox{ in } H^0 \mbox{ for a.e. } t \in [0,\infty). 
$$
\end{enumerate} 
\end{defn}

\noindent
The next theorem asserts that there are global conservative solutions to the Hunter-Saxton system \eqref{2hs}. 

\begin{thm}[Global weak solutions to the periodic Hunter-Saxton system] \label{gws}
For any initial data $(\tilde u,\tilde \rho)^\dagger \in H^1(\s) \times H^0(\s)$ satisfying $\| \tilde u_x \|^2 + \| \tilde \rho \|^2 = 4$ and $\tilde u(0) = 0$.
Then the pair
\begin{equation} \label{weak:sol}
\binom{u(t,\varphi(t,x))}{\rho(t,\varphi(t,x))} := \binom{\varphi_t(t,x)}{\varrho_t(t,x)}, \quad (t,x) \in [0,\infty) \times \s,  
\end{equation}
constitutes a global weak solution of the Hunter-Saxton system \eqref{2hs} with initial data $(\tilde u,\tilde \rho)^\dagger$. It furthermore holds that, for $t \ge 0$, 
\begin{equation} \label{conservative}
\left \| \left | (u(t,.),\rho(t,.))^\dagger \right\| \right |_\mathbb A^2 = 
\begin{cases}
\left \| \left | (\tilde u, \tilde \rho)^\dagger \right\| \right |_\mathbb A^2 \qquad \qquad \qquad \qquad \qquad \qquad\qquad \quad \;\;\;\; \;  \sin t = 0\\
\left \| \left | (\tilde u, \tilde \rho)^\dagger \right\| \right |_\mathbb A^2 - \frac{1}{\sin^2 t} \lambda \left(\{ \tilde u_x = -2 \cot t \} \cap \{ \tilde \rho = 0\} \right),  \; \sin t \neq 0. 
\end{cases}
\end{equation}
This solution is conservative in the sense that 
$$
\| u_x(t,.) \|^2 + \| \rho(t,.) \|^2 = \| \tilde u_x \|^2 + \| \tilde \rho \|^2 \quad \mbox{ for a.e. } t \in [0,\infty).  
$$ 
\end{thm}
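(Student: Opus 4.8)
The plan is to deduce Theorem \ref{gws} from the weak geodesic flow of Theorem \ref{wgf} by transporting the Lagrangian pair $(\varphi,\varrho)$ to the Eulerian pair $(u,\rho)$ through the flow map $\varphi(t,\cdot)$. The two structural identities I will lean on throughout are $\varphi_t = u\circ\varphi$ and $\varrho_t = \rho\circ\varphi$, which is exactly what the defining relation \eqref{weak:sol} asserts. I would organize the verification around the items of Definition \ref{we:so}: first the regularity (a)--(c), then the weak equations (d), and finally the energy identity \eqref{conservative} together with conservativity.

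For the regularity, I would record that $\varphi(t,\cdot)$ is a nondecreasing absolutely continuous self-map of $[0,1]$ fixing the endpoints (Theorem \ref{wgf}(i)), so its inverse is well defined off the flat set $N=\{\varphi_x=0\}$. Since $(\varphi_t,\varrho_t)\in T_{(\varphi,0)}\mathcal M^0_{AC}$ by Theorem \ref{wgf}(ii), Lemma \ref{character} produces a unique $u\in H^1(\s)$ with $u(0)=0$ and a $\rho\in H^0(\s)$ realizing the two identities above; this gives (a). The point for (b) is that the constraint $(\varphi_t)_x=0$ a.e.\ on $N$ forces $\varphi_t$ to be constant on each interval where $\varphi(t,\cdot)$ is flat, so $u=\varphi_t\circ\varphi^{-1}$ is unambiguous, and continuity of $u$ on $[0,\infty)\times\s$ follows from the joint continuity of $(\varphi,\varphi_t)$ in $(t,x)$ (the integrands in \eqref{varphi} are quadratic in $\tilde u_x,\tilde\rho\in L^2$), as in \cite{len:weak}. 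The initial data are checked directly: at $t=0$ one has $\varphi(0,\cdot)=\mathrm{id}$, $\varphi_t(0,\cdot)=\int_0^{\cdot}\tilde u_x=\tilde u$ (using $\tilde u(0)=0$) and $\varrho_t(0,\cdot)=\tilde\rho$. Item (c) is then immediate from the uniform bound $\|u_x(t,\cdot)\|^2+\|\rho(t,\cdot)\|^2\le 4$ furnished by \eqref{conservative}.

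For the weak equations (d), I would differentiate the two transport identities in $t$, using $\varphi_t=u\circ\varphi$ once more, to obtain $\varphi_{tt}=(u_t+uu_x)\circ\varphi$ and $\varrho_{tt}=(\rho_t+u\rho_x)\circ\varphi$ for a.e.\ $(t,x)$. Feeding these into the weak geodesic equation \eqref{geodesic}, expanding the right-translated Christoffel operator $\Gamma_{(\varphi,\varrho)}$ through its value at the identity and the explicit $A^{-1}$ formula, and composing back with $\varphi(t,\cdot)^{-1}$, the first component reproduces the nonlocal momentum equation of Definition \ref{we:so}(d), while the second component collapses (the bracket $-\tfrac12(u_x\sigma+v_x\rho)$ at coincident arguments) to the transport law $\rho_t+(u\rho)_x=0$; both are meant in $H^0(\s)$ for a.e.\ $t$, and absolute continuity of $t\mapsto u(t,\cdot)$ into $H^0(\s)$ is inherited from the regularity of the geodesic flow. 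The energy statement is the cleanest part: changing variables by $\xi=\varphi(t,x)$ on $\{\varphi_x>0\}$ and using $u_x\circ\varphi=\varphi_{tx}/\varphi_x$, $\rho\circ\varphi=\varrho_t$, the inner-product formula \eqref{inprod} at base point $(\varphi,0)$ identifies the Eulerian energy with the left-hand side of \eqref{lebesgue},
$$\tfrac14\left(\|u_x\|^2+\|\rho\|^2\right)=\left\langle\binom{\varphi_t}{\varrho_t},\binom{\varphi_t}{\varrho_t}\right\rangle_{(id,0)}^{\mathbb{A}}.$$
Since the right-hand side of \eqref{lebesgue} equals $1=\tfrac14\cdot 4$ at $t=0$, equation \eqref{conservative} is a restatement of \eqref{lebesgue}, and conservativity follows from \eqref{const:ener}: the correction $\tfrac{1}{\sin^2 t}\lambda(\{\tilde u_x=-2\cot t\}\cap\{\tilde\rho=0\})$ vanishes for all but countably many $t$, namely those $t$ for which $-2\cot t$ is an atom of the push-forward under $\tilde u_x$ of Lebesgue measure restricted to $\{\tilde\rho=0\}$.

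I expect the main obstacle to be item (d), the transport of the abstract weak geodesic equation \eqref{geodesic} into the two concrete Eulerian PDEs. The difficulty is twofold: the right translation of $\Gamma$ must be unwound so that the nonlocal term composes correctly with $\varphi^{-1}$, and the change of variables underlying this step degenerates exactly on the flat set $N$ where $\varphi_x=0$, so the passage between Lagrangian and Eulerian time derivatives, together with the a.e.\ differentiability of $u$ and $\rho$, has to be justified off a null set. The continuity of $u$ across these degeneracies (item (b)) is a secondary but closely related technical point; everything else reduces to the change-of-variables bookkeeping already prepared by Lemma \ref{character} and Theorem \ref{wgf}.
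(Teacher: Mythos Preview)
Your overall strategy---transport the Lagrangian data $(\varphi,\varrho)$ to Eulerian $(u,\rho)$ via \eqref{weak:sol}, verify Definition~\ref{we:so} item by item, and read off \eqref{conservative} from \eqref{lebesgue}---is exactly the paper's, and your treatment of (a), (b) and the energy identity matches it closely.

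Two points where the paper's implementation differs from your sketch are worth flagging. For (d) you propose to differentiate the transport identities pointwise to get $\varphi_{tt}=(u_t+uu_x)\circ\varphi$ and $\varrho_{tt}=(\rho_t+u\rho_x)\circ\varphi$, then compose with $\varphi^{-1}$. As written this is circular and uses objects that need not exist: you would be assuming $u_t$ before proving the absolute continuity of $t\mapsto u(t,\cdot)$, and $\rho_x$ is not defined for $\rho\in H^0(\s)$. The paper instead works distributionally: it pairs $u$ (resp.\ $\rho$) against test functions $\eta\in C^\infty_c((0,\infty))$, $\vartheta\in C^\infty(\s)$, passes to Lagrangian variables $y=\varphi(t,x)$, integrates by parts in $t$ so that the time derivative falls on $\eta$, and only then invokes the weak geodesic equation \eqref{geodesic}. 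All spatial derivatives of $\rho$ are thrown onto $\vartheta$, and absolute continuity of $t\mapsto u(t,\cdot)$ in $H^0(\s)$ is \emph{concluded} from the resulting identity, not assumed. Your closing paragraph correctly anticipates that the degeneration on $N$ is the hard part, but the cure is this weak formulation rather than a pointwise chain rule off a null set.

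For (c) you say the $L^\infty([0,\infty);H^0(\s))$ membership is immediate from the energy bound. The bound gives uniform boundedness, but one also needs (weak) measurability of $t\mapsto\rho(t,\cdot)$. The paper obtains this by showing $t\mapsto\int_\s\rho(t,y)\vartheta(y)\,dy=\int_\s\tilde\rho(x)\vartheta(\varphi(t,x))\,dx$ is continuous for smooth $\vartheta$, approximating general $\vartheta\in H^0(\s)$, and invoking Pettis' theorem.
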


\subsection*{Proof of the main results}

\begin{proof}[Proof of Theorem \ref{wgf}, (i).]
The result follows in view of the definition of $\varphi$ (equation \eqref{phix}) and of the condition $\| \tilde u_x \|^2 + \| \tilde \rho \|^2 = 4$. 
\end{proof}

\begin{proof}[Proof of Theorem \ref{wgf}, (ii).]
By Lemma \ref{character}, we have to verify the following five conditions: 
\begin{enumerate}[1.]
\item $\varphi_t \in AC(\s)$, $\varrho_t \in H^0(\s)$;
\item $\varphi_t(t,0) = 0$; 
\item $\varphi_{tx} = 0\;$ a.e. on $N$; and 
\item $\int_{\s\setminus N} \dfrac{\varphi_{tx}^2}{\varphi_x} + \varrho_t^2 \; \varphi_x \;dx < + \infty$. 
\end{enumerate}
Condition $4.$ is a consequence of the computations in the proof of $(iii)$ below. The map $x \mapsto \varphi_t(t,x)$ is absolutely continuous, $\varrho_t \in H^0(\s)$ by \eqref{varrho}, and $\varphi_t(t,0) = \varphi_t(t,1) = 0$. Moreover, we see  that $\varphi_{tx} = (u_x \circ \varphi) \; \varphi_x = 0$ 
whenever $\varphi_x = 0$. This proves Conditions 1.--3. 
\end{proof}

\begin{proof}[Proof of Theorem \ref{wgf}, (iii).]
Elementary but slightly tedious calculations reveal that  
\begin{eqnarray} 
\int_{ \{ \varphi_x > 0 \} } \left( \frac{\varphi_{tx}^2}{\varphi_x} + \varrho_{t}^2\;\varphi_x \right) \;dx &=& 4 \int_{ \{ \varphi_x > 0 \} } \left( f_t^2 + g_t^2 \right) \;dx \label{reveal} \\
&=& 4 \int_\s \left( f_t^2 + g_t^2 \right) \;dx - 4 \int_{\{ f = 0 \} \cap \{ g = 0 \}} \left( f_t^2 + g_t^2 \right) \;dx \nonumber \\
&=& 4 - 4 \int_{\{ f = 0 \} \cap \{ g = 0 \}} \left( f_t^2 + g_t^2 \right) \;dx, \label{4}
\end{eqnarray}
where we used that 
$$
4 \int_\s \left( f_t^2 + g_t^2 \right) \;dx = 4 \sin^2 t + \cos^2 t \int_\s \left( \tilde u_x^2 + \tilde \rho^2 \right) \;dx = 4
$$
to derive equality in \eqref{4}. The set $\{ f = 0 \} \cap \{ g = 0 \}$ is empty whenever $\sin t = 0$, whence
\begin{equation}
\left\langle \binom{\varphi_t}{\varrho_t}, \binom{\varphi_t}{\varrho_t} \right\rangle_{(\varphi,\varrho)}^\mathbb A = 1, \qquad t \ge 0,\; \sin t = 0. 
\end{equation}
Whenever $\sin t \neq 0$, then 
\begin{eqnarray*}
&& 4 - 4 \int_{\{ f = 0\} \cap \{ g = 0 \}} \left(\sin^2 t - \tilde u_x \cos t \sin t + \frac{\tilde u_x^2 + \tilde \rho^2}{4} \cos^2 t \right) \;dx \\[0.2cm]
&=& 4 - \frac{4}{\sin^2 t} \lambda \left(\{ \tilde u_x = - 2 \cot t \} \cap \{ \tilde \rho = 0 \}\right).  
\end{eqnarray*}
Summing up, we gain 
\begin{equation}
\int_{ \{ \varphi_x > 0 \} } \left( \frac{\varphi_{tx}^2}{\varphi_x} + \varrho_t^2\; \varphi_x \right) \;dx = 
\begin{cases}
4 \qquad \qquad \qquad \quad \qquad \qquad \qquad \qquad \qquad t \ge 0, \; \sin t = 0 \\
4 - \frac{4}{\sin^2 t} \lambda \left(\{ \tilde u_x = -2 \cot t \} \cap \{ \tilde \rho = 0\} \right) \quad t \ge 0, \; \sin t \neq 0.  
\end{cases}
\end{equation}
This proves the identity \eqref{lebesgue}. In order to complete the proof of (iii), it remains to show that \eqref{lebesgue} implies \eqref{const:ener}. This follows once we realize that $\lambda(\{ f = 0 \} \cap \{ g = 0 \}) = 0$. But it has already been demonstrated in \cite{len:weak} that $\{ f = 0 \}$ is a set of measure zero, which, in view of the measurability of the map $(t,x) \mapsto g(t,x) : [0,\infty) \times \s \rightarrow \R$, proves \eqref{const:ener}.     
\end{proof}


\begin{proof}[Proof of Theorem \ref{wgf}, (iv).]
Define 
$$
\phi(t,x) := \int_0^x f^2(t,y) \;dy, \quad \psi(t,x) := \int_0^x g^2(t,y) \; dy, 
$$
so that $\varphi(t,x) = \phi(t,x) + \psi(t,x)
$ (see equation \eqref{phix}).
Then 
$$
\varphi_t(t,x) = 2 \int_0^x f f_t +  g g_t\;dy, \quad \varphi_{tt} =  2 \int_0^x f_t^2 + g_t^2 + f f_{tt} + g g_{tt} \;dy.
$$ 
Since $f_{tt} = -f$ and $g_{tt} = -g$, the second-order derivative of $\varphi$ can be rewritten as 
$$
\varphi_{tt}(t,x) = 2 \int_0^x \left( f_t^2 + g_t^2\right) \; dy - 2 \int_0^x \left( f^2 + g^2 \right) \;dy. 
$$
This integral is split into two terms: 
\begin{equation} \label{first}
2 \int_0^x \chi_{ \{ \varphi_x > 0 \} } (f^2 + g^2) \left( \frac{f_t^2 + g_t^2}{f^2 + g^2} - 1 \right) \;dx + 2 \int_0^x \chi_{\{ \varphi_x = 0 \}} \left( f_t^2 + g_t^2 \right) \;dx. 
\end{equation}
From equation \eqref{reveal}, we deduce that $f_t^2 + g_t^2 = \frac{1}{4} \frac{\varphi_{tx}^2 + \tilde \rho^2}{\varphi_x}$ on $\{ \varphi_x > 0\}$, so that, for the first term in \eqref{first}, we may compute 
\begin{eqnarray*}
&& 2 \int_0^x \chi_{\{ \varphi_x > 0 \}} (f^2 + g^2) \left( \frac{f_t^2 + g_t^2}{f^2 + g^2} - 1 \right) \;dy 
\\
&&\stackrel{\eqref{reveal}}{=} \frac{1}{2} \int_0^x \chi_{\{ \varphi_x > 0 \}} \varphi_x \left[ (u_x\circ \varphi)^2 + (\rho\circ \varphi)^2 - 4 \right]\; dy 
\\
&&= 
\frac{1}{2} \int_0^{\varphi(x)} \chi_{ \varphi(t,\{ \varphi_x > 0 \}) } \left( u_x^2 + \rho^2 - 4 \right) \;dy = \frac{1}{2} \int_0^{\varphi(x)} (u_x^2 + \rho^2)\;dy - 2 \varphi(x).
\end{eqnarray*}
Since the second term in \eqref{first} vanishes identically on $\s$ for a.e. $t \in [0,\infty)$ (cf. \cite{len:weak}), we infer (recalling that $\| u_x(t,.) \|^2 + \| \rho(t,.) \|^2 \equiv 4$)
$$
\varphi_{tt} = \Gamma_{(\varphi,\varrho)}^{(1)} \left( (\varphi_t,\varrho_t), (\varphi_t,\varrho_t) \right)  \quad \mbox{ a.e. in time}.
$$
As $\varrho_t = \rho \circ \varphi$,  one immediately sees that $\varrho_{tt} = - (u_x \rho) \circ \varphi$, and so
$$
\varrho_{tt} = \Gamma_{(\varphi,\varrho)}^{(2)} \left( (\varphi_t,\varrho_t), (\varphi_t,\varrho_t) \right)  \quad \mbox{ a.e. in time}.
$$
This finishes the proof of Theorem \ref{wgf}. 
\end{proof}

\begin{proof}[Proof of Theorem \ref{gws}, Condition (a)]
The first condition of Definition \ref{we:so} is fulfilled as a consequence of Theorem \ref{wgf}(ii) and Lemma \ref{character}. In view of \eqref{const:ener} and the identity 
$$
\left\| \left| \binom{u(t,.)}{\rho(t,.)} \right\| \right|_\mathbb A^2 = \left \langle \binom{\varphi_t}{\varrho_t}, \binom{\varphi_t}{\varrho_t} \right \rangle_{(id,0)}^\mathbb A, \qquad t \ge 0,
$$
we moreover see that equation \eqref{conservative} holds. 
\end{proof}

\begin{proof}[Proof of Theorem \ref{gws}, Condition (b)]
The identity $\rho(0,.) = \tilde \rho\;$ a.e. on $\s$ follows directly from the definition of $\varrho$ \eqref{varrho}, and the continuity of the map $(t,x) \mapsto u(t,x) : [0,\infty) \times \s \rightarrow \R$ is a result of arguments akin to those of \cite{len:weak}. 
\end{proof}

\begin{proof}[Proof of Theorem \ref{gws}, Condition (c)] 
That the map $t \mapsto u_x(t,.)$ belongs to the space $L^\infty ([0,\infty); H^0(\s))$ can be proved, with the proper adaptations, as in \cite{len:weak}. Concerning the map $t\mapsto \rho(t,.)$, we first notice that 
\begin{eqnarray*}
\int_\s \rho(t,y) \theta(y)\;dy &=& \int_\s \rho(t,\varphi(t,x)) \; \theta(\varphi(t,x)) \; \varphi_x(t,x) \;dx \\
&=& \int_\s \tilde \rho(x) \; \theta(\varphi(t,x)) \;dx,   
\end{eqnarray*} 
which shows that  
\begin{equation} \label{trho}
t \mapsto \int_\s \rho(t,y) \; \theta(y) \;dy 
\end{equation}
is continuous for each $\theta \in C^\infty(\s)$. For a general $\theta \in H^0(\s)$, we choose an approximating sequence $\{ \theta_n \}_n \subseteq C^\infty(\s)$. Since strong convergence implies weak convergence, it follows, for every fixed $t$, that
$$
\int_\s \rho(t,y) \theta_n(y)\; dy \; \stackrel{n \rightarrow \infty}{\longrightarrow} \; \int_\s \rho(t,y) \theta(y)\;dy. 
$$  
This implies, for any $\theta \in H^0(\s)$, that the map \eqref{trho} is a point-wise limit of measurable maps and thus itself measurable. We finish the argument by invoking Pettis' theorem. 
\end{proof}

\begin{proof}[Proof of Theorem \ref{gws}, Condition (d)]
We make two claims. \\[0.1cm]
\textbf{Claim 1.} For any test functions $\eta \in C^\infty_c((0,\infty))$ and $\vartheta \in C^\infty(\s)$, it holds that
\begin{equation} \label{claim1}
\int_\s \left\{ \int_0^\infty u(t,y) \eta_t(t)\;dt + \int_0^\infty \left( \Gamma_{(id,0)}^{(1)} (( u,\rho ), ( u,\rho )) - u u_x\right) (t,y) \eta(t) \;dt \right\} \vartheta(y) \;dy = 0. 
\end{equation}
\textbf{Claim 2.} Moreover, for any test functions $\eta \in C^\infty_c((0,\infty))$ and $\vartheta \in C^\infty(\s)$, one has that 
\begin{equation} \label{claim2}
\int_\s \left\{ \int_0^\infty \rho(t,y) \eta_t(t) \;dt - \int_0^\infty \left( \Gamma_{(id,0)}^{(2)} ((u,\rho),(u,\rho)) - u \rho_x \right) \eta(t) \; dt \right\} \vartheta(y) \;dy = 0. 
\end{equation}
For the moment, let us assume that Claim 1 is true. Then 
$$
\int_0^\infty u(t,y) \eta_t(t) \;dt = -\int_0^\infty \left( \Gamma_{(id,0)}^{(1)} ((u,\rho),(u,\rho)) - u u_x \right) (t,y) \eta(t) \;dt
$$
for any $\eta \in C^\infty_c((0,\infty))$, so that we may conclude that the map $t \mapsto u(t,.), \; [0,\infty) \rightarrow H^0(\s)$ is absolutely continuous and that the equality 
$$
u(t) = \tilde u + \int_0^t \left( \Gamma_{(id,0)}^{(1)} ((u,\rho),(u,\rho)) - u u_x \right) (s) \;ds
$$
holds in $H^0(\s)$. 
Since
$$
\Gamma_{(id,0)}^{(1)} ((u,\rho),(u,\rho)) = \left( x \mapsto \frac{1}{2} \int_0^x u_x^2 + \rho^2 \;dy - \frac{x}{2} \int_\s u_x^2 + \rho^2 \;dy \right)
$$
this proves that $u$ is a weak solution to the first component of the Hunter-Saxton system \eqref{2hs}. \\
As for the second component, \eqref{claim2} yields 
$$
\int_0^\infty \rho(t,y) \; \eta_t \;dt = \int_0^\infty  \Gamma_{(id,0)}^{(2)} ((u,\rho),(u,\rho))  (t,y) \; \eta(t) \;dt, 
$$
for any test function $\eta \in C^\infty_c((0,\infty))$, whence $t \mapsto \rho(t,.)$, $[0,\infty) \rightarrow H^0(\s)$ solves 
$$
\rho(t) = \int_0^t  \Gamma_{(id,0)}^{(2)} ((u,\rho),(u,\rho)) \; ds. 
$$
In view of 
$$
\Gamma_{(id,0)}^{(2)} ((u,\rho),(u,\rho)) = \left( x \mapsto -(u\rho)_x \right), 
$$
this shows that $\rho$ solves the second component of the Hunter-Saxton system \eqref{2hs}. The proof is thus complete. 
\end{proof}

\begin{proof}[Proof of Claim 1.]
This proof follows, mutatis mutandis, the lines of \cite{len:weak} (p. 655), so we omit it here. 
\end{proof}

\begin{proof}[Proof of Claim 2.]
We study the spatial integrand in \eqref{claim2} first:  
\begin{eqnarray*}
\int_\s \rho(t,y) \;\vartheta(y)\; dy &=& \int_\s \rho(t,\varphi(t,x))\;\vartheta(\varphi(t,x))\; \varphi_x(t,x) \;dx \\
&=& \int_\s \varrho_t (t,x) \; \vartheta(\varphi(t,x)) \; \varphi_x(t,x) \;dx. 
\end{eqnarray*}
Integrating by parts in \eqref{claim2} now gives
\begin{eqnarray*}
&&\int_0^\infty \int_\s \rho(t,y)\; \vartheta(y) \;dy \; \eta_t(t) \;dt 
\\[0.2cm]
&=& - \int_0^\infty \int_\s \frac{d}{dt} \left( \varrho_t(t,x)\; \vartheta(\varphi(t,x))\; \varphi_x(t,x) \right) \; \eta(t) \;dt 
\\[0.2cm]
&=& - \int_0^\infty \int_\s \left\{ \varrho_{tt}(t,x) \; \vartheta(\varphi(t,x)) \varphi_x(t,x) + \varrho_t(t,x) \; \varphi_t(t,x) \; \vartheta_x(\varphi(t,x)) \; \varphi_x(t,x) \right.
\\[0.2cm]
&&-  \left. \varrho_t(t,x)\;\vartheta(\varphi(t,x)) \varphi_{tx} (t,x) \right\} \;dx \; \eta(t) \; dt.
\end{eqnarray*}
If we employ the relation $\varrho_{tt} = \Gamma_{(id,0)}^{(2)} ((\varphi_t,\varrho_t),(\varphi_t,\varrho_t))$ (cf. \eqref{geodesic}), the change of variables $y = \varphi(t,x)$ yields 
\begin{eqnarray*}
&&-\int_0^\infty \int_\s \left\{  \Gamma_{(id,0)}^{(2)} ((u,\rho),(u,\rho))(t,y) \vartheta(y) \right. 
\\[0.2cm]
&&+ \left. \rho(t,y)\; u(t,y) \; \vartheta_x(y) + \rho(t,y) \vartheta(y) u_x(t,y) \right\} dy \eta(t) dt 
\\[0.2cm]
&=& -\int_0^\infty \int_\s \left(  \Gamma_{(id,0)}^{(2)} ((u,\rho),(u,\rho))(t,y) -  u(t,y)\;\rho_x(t,y)  \right) \; \vartheta(y) \;dy\; \eta(t) \;dt. 
\end{eqnarray*}
\end{proof}

\section*{Acknowledgments}
The author was partially supported by JSPS Postdoctoral Fellowship P09024. He is obliged to {\sc J.~Escher} for the personal communication \cite{EE} and for beneficial discussions.

 \end{document}